\newtheorem{theorem}{Theorem}[section]
\newtheorem{lemma}[theorem]{Lemma}
    \numberwithin{remarks}{section}
    \newtheorem{proposition}[theorem]{Proposition}
\newtheorem{definition}[theorem]{Definition}
\numberwithin{theorem}{section} 
\newcommand{\eqnsection}{
    \renewcommand{\theequation}{\thesection.\arabic{equation}}
    \makeatletter
    \csname @addtoreset\endcsname{equation}{section}
    \makeatother}
\newcommand{\dd}{\delta}
\newcommand{\lar}{\longrightarrow}
\newcommand{\eps}{\varepsilon}
\newcommand{\aaa}{\alpha}
\newcommand{\reals}{\mathbb{R}}
\def \be{\begin{equation}}
\def \ee{\end{equation}}
\def \bt{\begin{theorem}}
\def \et{\end{theorem}}
\def \bl{\begin{lemma}}
\def \el{\end{lemma}}
\def \bea{\begin{eqnarray}}
\def \eea{\end{eqnarray}}
\def \bas{\begin{eqnarray*}}
\def \eas{\end{eqnarray*}}
\def \ddd{\diamond}
\def \lll{\label}
\newcommand {\rrr}[1]{(\ref{#1})}
\def \aa{\alpha}
\def \bb{\beta}
\def \ga{\gamma}
\def \la{\lambda}
\def \lan{\langle}
\def \ran{\rangle}
\def \ppp{\rho}
\def \ff{\infty}
\def \R{\mathbb{R}}
\def \N{\mathbb{N}}
\def \H{{\bf H}}
\def \a{\alpha_t'(y)}
\def \BB{{\cal B}}
\def \DD{{\cal D}}
\def \HH{{\cal H}}
\def \PP{{\cal P}}
\def \SS{{\cal S}}
\def \E{\mathbf{E}}
\def \({\left(}
\def \){\right)}
\def \lk{\left[}
\def \rk{\right]}
\def \nn{\nonumber}
\def \vski{\vspace{12pt}}
\def \sgn{\mathop{\mathrm{sgn}}}
\def \bc{\begin{center} }
\def \ec{\end{center} }
\def \bs{\begin{slide} }
\def \es{\end{slide} }
\def\square{{\vcenter{\vbox{\hrule height.3pt
         \hbox{\vrule width.3pt height5pt \kern5pt
            \vrule width.3pt}
         \hrule height.3pt}}}}
\begin{document}
\title{On the Tanaka formula for the derivative of self-intersection local time of fractional Brownian motion}
\author{Paul Jung \quad Greg Markowsky}
\maketitle

\abstract{The derivative of self-intersection local
time (DSLT) for Brownian motion was introduced by Rosen \cite{rosen2005} and subsequently used by others to study the $L^2$ and $L^3$ moduli
of continuity of Brownian local time. A version of the DSLT for fractional Brownian motion (fBm) was introduced in \cite{yan2008}; however, the definition given there presents difficulties, since it is motivated by an incorrect application of It\^o's formula.
To rectify this, we introduce a modified DSLT for fBm and prove existence using an explicit Wiener chaos
expansion. We will then argue that our modification is the natural version of the DSLT by rigorously proving the corresponding Tanaka formula. This formula corrects a formal identity given in both \cite{rosen2005} and \cite{yan2008}.
In the course of this endeavor we prove a Fubini theorem for integrals with respect to fBm. The Fubini theorem may be of independent interest,
as it generalizes (to Hida distributions) similar results previously seen in the literature. As a further byproduct of our investigation, we also provide a correction to an important technical second-moment bound for fBm given in \cite{hu2001}.}

\tableofcontents
\section{Introduction}

Let $B_t$ denote a Brownian motion on $\R$ with $B_0=0$ and let
$L(t,x)=L^x_t$ be its local time at $x$ up to time $t$. In
connection with stochastic area integrals with respect to local time
\cite{walsh1983} and the Brownian excursion filtration, Rogers and
Walsh \cite{rogers1991local} studied the space integral of local
time,

\be A(t,x) := \int_0^t 1_{[0,\infty)}(x-B_r) \,dr \ \ ;\
\frac{\partial A}{\partial x} = L(t,x). \ee
In a companion work on
the Brownian local time sheet \cite{rogers1991instrinsic}, the
occupation density of $A(t,B_t)$ was investigated. It was also shown in \cite{rogers1990} that the
process $A(t,B_t) - \int_0^t L(s,B_s)
dB_s$ has finite, non-zero
4/3-variation. An alternate proof of this fact using fractional martingales was recently given in \cite{hu2012frac}.

In \cite{rosen2005}, Rosen developed a new approach to the study of $A(t,B_t)$.
The motivation and guiding intuition was as follows. If one lets $h(x) := 1_{[0,\ff)}(x)$,
then formally
\be  \frac{d}{dx} h(x) = \dd(x)\quad\text{and}\quad\frac{d^2}{dx^2} h(x) = \dd'(x),
\ee
where $\dd$ is the Dirac delta distribution. Holding $r$ fixed and applying It\^o's formula with respect to the Brownian motion $B_s-B_r$ gives
\be \lll{cleopatra}
1_{[0,\ff)}(B_t - B_r) - 1_{[0,\ff)}(0) = \int_r^t \dd(B_s - B_r)\, dB_s + \frac{1}{2} \int_r^t \dd'(B_s - B_r)\, ds.
\ee
Integrating with respect to $r$ from $0$ to $t$ and interchanging the order of integration leads to

\be \nn
\label{eq:Ito applicationcleo}
\int_0^t 1_{[0,\infty)}(B_t-B_r) \,dr - t = \int_0^t \int_{0}^{s}
\delta(B_s-B_r) \, dr\, dB_s + \frac{1}{2}\int_0^t \int_0^s \delta'(B_s-B_r)\, dr
\,ds.
\ee
We now note that the first term on the left is $A(t,B_t)$, and the first term on the right can be expressed in terms of the local time $L(t,x)$. Rearranging, we arrive at

\be
\label{eq:Ito application}
A(t,B_t) - \int_0^t L(s,B_s)
\, dB_s = t+\frac{1}{2}\int_0^t \int_0^s \delta'(B_s-B_r)\, dr
\,ds.
\ee
This formal identity was stated in \cite{rosen2005}. We note, however, that there is some ambiguity, since if we change the definition of $h$ only slightly to
$h(x) := 1_{(0,\ff)}(x)$, change the definition of $A$ to $A(t,x) = \int_0^t 1_{(0,\infty)}(x-B_r) \,dr$, and apply It\^o's formula in the same manner, we lose the $t$ term:
\be
\label{eq:Ito application234}
A(t,B_t) - \int_0^t L(s,B_s)
\, dB_s = \frac{1}{2}\int_0^t \int_0^s \delta'(B_s-B_r)\, dr
\,ds.
\ee
This observation shows that care must be taken, as the two different definitions of $A(t,x)$ agree almost surely. We will see later, in fact, that neither \rrr{eq:Ito application} nor \rrr{eq:Ito application234} is correct, and that the deterministic term required is $\frac{t}{2}$.

Using \eqref{eq:Ito application} as motivation, Rosen \cite{rosen2005}
showed the existence of a process $\alpha_t'(y)$ now known as the {\it derivative
of self-intersection local time} (DSLT) for $B_t$ which is  formally
defined as
\be \label{def:BMversion}
\a := -\int_0^t \int_0^s \dd '(B_s -B_r -y)\, dr\, ds.
\ee
This process was later used in  \cite{hu2009stochastic} and \cite{hu2010central} to prove
central limit theorems for  the $L^2$ and $L^3$ moduli of continuity of Brownian local time.

In
\cite{markowsky2008proof}, it was rigorously proved that almost surely, for all $y$ and $t$,
\be\label{gregs_eqn}
\frac{1}{2}\,\aaa'_t(y) +\frac{1}{2}\sgn(y)t=
 \int_0^t L_s^{B_s-y}\,dB_s - \frac{1}{2}\int_{0}^{t} \sgn
(B_{t}-B_r-y) \, dr \;.\ee
%
%
%
%
%
%
%
%
In particular, if we take $y=0$ and use
$\frac{1}{2}(\sgn(x)+1)$ instead of $1_{[0,\infty)}$ or
$1_{(0,\infty)}$, we obtain

\be
\label{greg_eqn2}
A(t,B_t) - \int_0^t L(s,B_s)
\, dB_s = \frac{t}{2}+\frac{1}{2}\int_0^t \int_0^s \delta'(B_s-B_r)\, dr
\,ds.
\ee
%

\noindent Note that we have the deterministic term $t/2$ instead of the terms $t$ or $0$ which appear in \rrr{eq:Ito application} and \rrr{eq:Ito application234}. This arises from the use of 
$$\frac{1}{2}(\sgn(x)+1)=\frac{1}{2}(1_{[0,\infty)}+1_{(0,\infty)})$$ which is justified by the calculations in the proof of our Tanaka formula in Section 4 below. To see an immediate calculation of this term,
note that the expectations of the two integral terms in \rrr{greg_eqn2} are both 0, by properties of stochastic integrals and the fact that $\dd'$ is an odd distribution. The deterministic term must therefore be $\E[A(t,B_t)]$, and by symmetry we have $$\E[A(t,B_t)] = \frac{1}{2}\E[\int_0^t 1_{[0,\infty)}(B_t-B_r) \,dr + \int_0^t 1_{(-\infty,0)}(B_t-B_r) \,dr] = \frac{1}{2} \int_{0}^{t} dr = t/2.$$

We will refer to \rrr{greg_eqn2} as the {\it Tanaka formula} for $A(t,B_t)$. The method of proof in \cite{markowsky2008proof} also serves to give an alternate proof of
the existence of $\a$ and joint continuity
of $\a + \sgn(y)t$, which Rosen had deduced earlier by other methods. In \cite{markowsky2011},
yet another existence proof for $\aaa_t '(y)$ was given using its Wiener chaos expansion.
Our aim is to extend these results to fractional Brownian motion.

Standard fractional Brownian motion (fBm), with Hurst parameter $H\in(0,1)$, is the unique
centered Gaussian process with covariance function \be
\E(B_t^HB_s^H) = \frac{1}{2}(s^{2H} + t^{2H} - |t-s|^{2H}).\ee
Note that $H=1/2$ gives us a
standard Brownian motion.
In \cite{hu2001}, it was shown that the self-intersection local time of fBm is differentiable in the Meyer-Watanabe sense.
Using Hu's arguments, for $H<2/3$, \cite{yan2008} deduced the existence\footnote{A slightly erroneous bound is
utilized in both \cite{hu2001} and \cite{yan2008}, as well as in other sources, for which we have provided a corrected modification in Appendix \ref{appendix}.} of a process formally defined by
\be \lll{bsl230}
\tilde\alpha_{t}'(0) := -\int_0^t \int_0^s \delta' (B_s^H -B_r^H)s^{2H-1} \, dr\, ds.
\ee
It was also stated (\cite[Eq. (2.4)]{yan2008}) that the identity
\be\lll{mar0} H \,\tilde \aaa'_t(0)= t +
 \int_0^t L_s^{B^H_s}\, dB^H_s - \frac{1}{2}\int_{0}^{t} \sgn
(B^H_{t}-B^H_r) \, dr \ee
holds in the sense of distributions, and this reduces to  \eqref{eq:Ito application} when $H=1/2$. However, we will argue that \rrr{mar0} is incorrect, and in fact that the definition \rrr{bsl230} is also not the proper one needed to study the functionals given on the right side of \rrr{mar0}. To begin with, as stated above, \eqref{greg_eqn2} and the ensuing arguments show that the $t$ term in \eqref{eq:Ito application} should be replaced by $\frac{t}{2}$, and by the same reasoning should in \rrr{mar0} as well. The larger problem lies in the application of It\^o's formula, and the presence of the kernel $s^{2H-1}$ in \rrr{bsl230}. Using the fractional It\^o formula (Theorem \ref{lemma:Ito}) and arguing as in step \rrr{cleopatra}, with $\phi(x) = \frac{1}{2}(\sgn(x)+1)$ in place of $1_{[0,\ff)}(x)$, yields

\be \lll{cleopatra2}
\phi(B_t - B_r) - \phi(0) = \int_r^t \dd(B_s - B_r)\, dB_s + H \int_r^t \dd'(B_s - B_r)\,(s-r)^{2H-1}ds.
\ee
Integrating with respect to $r$ from $0$ to $t$ and switching the order of integration leads to
\be
\label{eq:Ito application2}
A(t,B_t) - \int_0^t L(s,B_s)
\, dB_s = \frac{t}{2}+H \int_0^t \int_0^s \delta'(B_s-B_r)\,(s-r)^{2H-1} dr
\,ds.
\ee
%
%
The more natural version of the DSLT of fBm is therefore revealed to be formally defined by
\be \label{bsl2} \alpha_{t}'(y) := -H \int_{0}^{t}\int_{0}^{s}
\dd'(B^H_s-B^H_r-y) (s-r)^{2H-1} \,dr \,ds.
\ee
That \rrr{bsl2} is the correct definition, rather than \rrr{bsl230}, can be intuitively justified by noting the importance of the set $\{0 \leq r=s \leq t\}$ for any type of intersection local time, rather than the set $\{s=0\}$. However, the singularity of this kernel at the set $\{0 \leq r=s \leq t\}$ makes the existence of \rrr{bsl2} a bit more delicate than that of \rrr{bsl230} for $H < 1/2$. Nonetheless, when $H<2/3$, we will be able to show that such a process exists as a limit in $L^2(\PP)$.  In particular, let $f_1(x)$
denote the standard Gaussian density and set $f_\eps(x) := \frac{1}{\sqrt{\eps}}f_1(\frac{x}{\sqrt{\eps}})$, so that
\be f_\eps(x) = \frac{1}{\sqrt{2\pi\eps}} e^{-\frac{1}{2}x^2/\eps}\quad \text{and}\quad f_\eps'(x) =
\frac{d}{dx} f_\eps(x).\ee
Note that as $\eps\to 0$, $f_\eps(x)$ converges weakly to the Dirac delta distribution, $\dd(x)$. Recall that any $L^2(\PP)$ random variable $F$ measurable with respect to the filtration of $B_s^H$ at time $t$  admits a Wiener chaos expansion (see \cite{nualart1995}) $F=\sum_{n=0}^{\ff} I_n(f_n)$, where $I_n$ represents the multiple Wiener-It\^o integral:

\begin{equation} \label{}
I_n(f_n) = \frac{1}{n!} \int_{0}^{t} \int_{0}^{t} \cdots \int_{0}^{t} f_n(v_1, v_2, \ldots, v_n) dB^H_{v_n} \ldots dB^H_{v_1}.
\end{equation}
We are now ready to state our main results. The process $\alpha_t'(y)$ is rigorously defined as the $L^2(\PP)$ limit, as $\eps\to 0$, of
\be \lll{bsl23}
\alpha_{t,\eps}'(y) := -H\int_0^t \int_0^s f'_\eps (B_s^H -B_r^H
-y)(s-r)^{2H-1} \, dr\, ds.
\ee
The Wiener chaos can be used to show existence in $L^2(\PP)$, and we have the following theorem.

\begin{theorem} \label{mmm}
For $H<2/3$, $\aaa'_t(y)$ exists in $L^2(\PP)$.  In particular, the Wiener chaos decomposition for $\aaa'_t(0)$  is

\be \lll{a1}
\aaa'_{t}(0) = \sum_{m=1}^\ff I_{2m-1}(g(2m-1,t))
\ee
where $g(2m-1,t)\in\H^{\otimes 2m-1}$ is defined by

\bea \lll{a2}
  g(2m-1,t) &=& g(2m-1,t;v_1, \ldots, v_{2m-1})
\\ &=&\nn \frac{(-1)^m}{(m-1)!2^{m-1}\sqrt{2\pi}} \int_{0}^{t}\int_{0}^{s}\frac{\prod_{j=1}^{2m-1}\rho^H_{r,s}(v_j)\,dr\,ds}{(s-r)^{H(2m-1)+1}}
\eea
and
\be\label{explicitform}
\rho^H_{r,s}(x)= \frac{c_H\(\frac{s-x}{|s-x|^{3/2-H}}-\frac{r-x}{|r-x|^{3/2-H}}\)}{2\Gamma(H+1/2) \cos(\frac{\pi}{2}(H+1/2))}.
\ee
\end{theorem}

%
%
Furthermore, we shall prove the Tanaka formula in the following form.

\begin{theorem}\label{tanakaThm} For $0<H<2/3$,  the following equality holds in $L^2(\PP)$  for all y and t:
\be\lll{mar} H \,\aaa'_t(y) + \frac{1}{2}\sgn(y)t =
 \int_0^t L_s^{B^H_s-y}\, dB^H_s - \frac{1}{2}\int_{0}^{t} \sgn
(B^H_{t}-B^H_r-y) \, dr \;.\ee
\end{theorem}


As indicated earlier, this can easily be transformed into a formula containing $A(t,B_t)$ if desired. The rest of the paper is organized as follows. In  Section \ref{sec:existence2}, we prove the existence of   $\aa'_t(0)$ in $L^2(\PP)$ for $H<2/3$ and discuss a conjecture for the behavior when $H\ge 2/3$.
 In
 Section \ref{sec:chaos} we prove the
Wiener chaos expansion which also extends the existence of DSLT to all $y\in\R$. Finally, in Section \ref{sec:tanaka} we prove the Tanaka formula.
For completeness, in the appendices
we have included a quick review of some tools from Malliavin calculus that we have used. One of these tools, a Fubini theorem for fractional Brownian integrals, is of independent interest as it generalizes, to Hida distributions, similar results found in \cite{cheridito2005} and \cite{mishura2008}.

As a side note, we mention that it also may be considered natural to define the DSLT of fBm in a different manner, namely with the kernel $(s-r)^{2H-1}$ removed in \rrr{bsl2}. This other version is more natural for an occupation times formula, as well as for differentiation in the space variable $y$. An interested reader is referred to \cite{jung2012holder} for details. However, as mentioned above, our primary interest here lies in the Tanaka formula \rrr{mar}, which does require the kernel to be present.

\section{Existence of $\aaa'_t(0)$}\label{sec:existence2}

Fix $0<H<2/3$. One does not need the Wiener chaos to show existence of the DSLT when $y=0$. In this case, an easier proof
can be deduced by the arguments of
\cite{yan2008}.
This proof presents some points of interest, so (without full rigor) we sketch it now.

The key lies in computing
$\E[(\aaa_{t,\eps}'(0))^2]$. In the sequel, $K$ denotes a positive constant which may change from line to line.
We start by expressing
$f_\eps(x)$ in a convenient form using the Fourier identity $f_1(x) = \frac{1}{2\pi}\int_{\mathbb{R}} e^{ipx}e^{-p^2/2}dp$. This gives
\be \label{}\nn
f_\eps(x) = 
\frac{1}{2\pi\sqrt{\eps}}\int_{\mathbb{R}}e^{ipx/\sqrt{\eps}} e^{-p^2/2}\,dp = \frac{1}{2\pi}\int_{\mathbb{R}}e^{ipx} e^{-\eps p^2/2}\,dp,
\ee
whence
\be \label{}
f'_\eps(x) = \frac{i}{2\pi}\int_{\mathbb{R}} p e^{ipx} e^{-\eps p^2/2}\,dp.
\ee

Now, for $\DD_t=\{0\leq
r\leq s \leq t\}$,
\bea \label{exp} &&\E[(\aaa_{t,\eps}'(0))^2] =\\
\nn&& K \int_{\DD_t^2} \int_{(p,q)\in\mathbb{R}^2} pq
e^{-\eps(p^2+q^2)/2}e^{-\mbox{Var}\( p (B^H_{s}-B^H_r)+
q(B^H_{s'}-B^H_{r'})\)/2}\\
\nn&& \qquad \times (s-r)^{2H-1}(s'-r')^{2H-1} \,dp\,dq\,dr\,dr'\,ds\,ds'.
\eea
We will show that this can be bounded uniformly in $\eps$. Using standard notation from the literature (see for example \cite{biagini2008}), let

\bea \label{tor}
\lambda &:=& \mbox{Var}(B^H_{s}-B^H_r) = |s-r|^{2H}
\\ \nn  \ppp &:=& \mbox{Var}(B^H_{s'}-B^H_{r'}) = |s'-r'|^{2H}
\\ \nn  \mu &:=& \mbox{Cov}(B^H_{s}-B^H_r,B^H_{s'}-B^H_{r'}) \\
\nn&=& \frac{1}{2}(|s'-r|^{2H} + |s-r'|^{2H} - |s'-s|^{2H} - |r'-r|^{2H}).
\eea
With this notation, we have
\bea \label{exp2}
 && \E[(\aaa_{t,\eps}'(0))^2] = K \int_{\DD_t^2} (s-r)^{2H-1}(s'-r')^{2H-1} \\
\nn&& \times \int_{\mathbb{R}^2} pq
e^{-\eps(p^2+q^2)/2}e^{-(p^2\lambda +2pq \mu + q^2 \ppp)/2} \,dp\,dq\,dr\,dr'\,ds\,ds'.
\eea
Isolating the $dq$ integral we have:
\bea \label{}
&& \int_\R qe^{-pq\mu}e^{-q^2(\ppp+\eps)/2} \,dq \\ \nn && =
e^{\frac{p^2\mu^2}{2(\ppp+\eps)}}\int_\R
qe^{-(q+\frac{p\mu}{\ppp+\eps})^2(\ppp+\eps)/2} \,dq
\\ \nn && = e^{\frac{p^2\mu^2}{2(\ppp+\eps)}}\left[\int_\R qe^{-q^2(\ppp+\eps)/2}
\,dq - \frac{p\mu}{(\ppp+\eps)} \int_\R
e^{-q^2(\ppp+\eps)/2} \,dq\right].
\eea
The first term on the right side is an integral of an odd function, and thus
vanishes. The second integral on the right side converges as $\eps\to 0$ to
\be \label{}\nn
Ke^{p^2\mu^2/(2\ppp)}\frac{p\mu}{\ppp^{3/2}}.
\ee
At $\eps=0$, we therefore have  for the $dp$ integral,
\be \label{jkm}\nn
\frac{K\mu}{\ppp^{3/2}}\int_\R p^2 e^{-\frac{p^2}{2\ppp}(\lambda \ppp - \mu^2)} \,dp = \frac{K\mu}{(\lambda \ppp - \mu^2)^{3/2}}.
\ee
Thus, we have reduced the problem to determining the integrability, over $\DD_t^2$, of
\be
\frac{K\mu
(s-r)^{2H-1}(s'-r')^{2H-1}}{(\la \ppp - \mu^2)^{3/2}}.
\ee
The existence of $\aaa'_t(0)$
is therefore proved by invoking the following lemma, which is proved in the appendix.

\bl \lll{lab} If $H<2/3$, then

\be \lll{t3}
\int_{\DD_t^2}\frac{\mu (s-r)^{2H-1}(s'-r')^{2H-1}}{(\la \ppp - \mu^2)^{3/2}}  \,dr\,dr'\,ds\,ds'<\ff.
\ee

\el
A few remarks are in order. First, something close to Lemma \ref{lab} was proved in \cite{yan2008},
but they had a factor of $s^{2H-1}$ where we have $(s-r)^{2H-1}$. As indicated earlier, this resulted from incorrectly applying the fractional It\^o formula to $B^H_s$ as opposed to $B^H_s-B^H_r$.

Next we note that in \cite{rosen2005}, Rosen states ``The
(DSLT of Brownian motion) in $\mathbb{R}^1$, in a certain sense, is even
more singular than self-intersection local time in $\mathbb{R}^2$.''
If we believe that the critical Hurst parameter $H_c$ for  the DSLT to exist in $L^2$ is $2/3$, as seems likely, then
Rosen's statement would be supported by the fact that $2/3$ is less than the
critical Hurst parameter  for the self-intersection local time of planar fBm (which is most likely $\tilde H_c=3/4$, see \cite{rosen1987, hu2005}).

Above the critical parameter $H_c$, the behavior of $\aaa'_{t,\eps}(0)$ as $\eps \lar 0$ is also of interest. One would expect a central
limit theorem to exist, along the lines of
Theorem 2 in \cite{hu2005} or Theorem 1 in \cite{markowsky2008renormalization},
but this remains unproved. In particular, it seems as though the techniques developed in \cite{hu2005} should apply,
especially since the Wiener chaos expansion for $\aaa'_\eps$ is readily computed (see Section \ref{sec:chaos}),
but the presence of the derivative seems to complicate matters. Nevertheless, we venture the following conjecture.

\vski
{\it \bf Conjecture:} {\it
\begin{itemize}
\item The critical parameter is $H_c=2/3$. At $H_c$, $\frac{1}{\log(1/\eps)^\ga} \aaa'_{t,\eps}(0)$ converges in distribution to a normal law for some $\ga>0$.
\item For $H>H_c$, $\eps^{-\ga(H)} \aaa'_{t,\eps}(0)$ converges in distribution to a normal law for some function
$\ga(H)>0$ which is linear in $1/H$ and for which $\ga(2/3)=0$.
\end{itemize}
}
\vski
\noindent This would mirror the behavior of the intersection local time as seen in \cite{hu2005} (and, to a lesser extent, \cite{markowsky2008renormalization}, which dealt with stable processes), in which $L^2$ convergence occurred for $\aaa_\eps$ for $H < H_c$, $\aaa_\eps$ diverged logarithmically in $\eps$ for $H=H_c$, and $\aaa_\eps$ diverged polynomially in $1/\eps$ for $H>H_c$. We should mention that this problem may prove to be very difficult: to our knowledge, no such central limit theorem has yet been proved even for intersection local time in two dimensions at $H_c= 3/4$.

\section{The Wiener chaos and existence of $\aa'_t(y)$.}\label{sec:chaos}

In this section we prove Theorem \ref{mmm}.  We break this up into two parts by first deriving the Wiener chaos expansion for $\aaa'_t(0)$.  We then  adapt the arguments used in obtaining the Wiener chaos to
show existence, in $L^2(\PP)$, of
$$
\aa'_t(y)= -H\lim_{\eps_\to 0}\int_0^t \int_0^s f'_\eps (B_s^H -B_r^H
-y)(s-r)^{2H-1} \, dr\, ds
$$
 for all $y\in\R$. To reduce notation, in the sequel we write $\H=L^2(\R)$. Our arguments in this section closely parallel those in \cite{hu2005}.

\begin{proof}[Proof of the Wiener chaos expansion]
For the explicit form of $\rho^H_{r,s}$ given in \eqref{explicitform} we refer the reader to Example A1 in \cite{elliot2003}.
For \eqref{a1} and \eqref{a2} we use fractional white noise theory (see Appendix \ref{app:fwn}) to write
$$B^H_t:=\lan \omega, \rho^H_{0,t}\ran$$
 so that in particular, its Malliavin derivative is
$DB^H_t=\rho^H_{0,t}$.

It is not hard to see that $f'_\eps(B^H_s-B^H_r)$ is in $\cap_{k\in\N}\mathbb{D}^{k,2}$,
thus by Stroock's formula, the $n$-th integrand in the chaos expansion of $\aaa'_{t,\eps}(0)$ is given by

\bea \lll{}
&&\frac{1}{n!}\int_{0}^{t}\int_{0}^{s} (s-r)^{2H-1}\E[D^n_{v_1,\ldots,v_n}f_\eps '(B^H_s-B^H_r)]\,dr\,ds \\
&=&\nn \frac{1}{n!}\int_{0}^{t}\int_{0}^{s} (s-r)^{2H-1}\E[(\frac{d^{n+1}}{dx^{n+1}} f_\eps)(B^H_s-B^H_r)]\prod_{j=1}^n \rho^H_{r,s}(v_j)\,dr\,ds.
\eea

Similar to \eqref{bsl23}, we write
\be \label{}
\frac{d^{n}}{dx^{n}}f_\eps(x) = \frac{i^{n}}{2\pi}\int_{\mathbb{R}}e^{ipx} p^{n}e^{-\eps p^2/2}\,dp.
\ee
Thus,
\bea \label{heyoh}\nn
&& \E[(\frac{d^{n+1}}{dx^{n+1}} f_\eps)(B^H_s-B^H_r)] = \frac{i^{n+1}}{2\pi}\int_{\mathbb{R}}\E[e^{ip(B^H_s-B^H_r)}] p^{n+1}e^{-\eps p^2/2}\,dp
\\ \nn && \hspace{1cm} = \frac{i^{n+1}}{2\pi}\int_{\mathbb{R}}p^{n+1}e^{-((s-r)^{2H}+\eps) p^2/2}\,dp
\\ \nn && \hspace{1cm} = \frac{i^{n+1}}{2\pi((s-r)^{2H}+\eps)^{(n/2)+1}}\int_{\mathbb{R}}p^{n+1}e^{- p^2/2}\,dp
\\  \nn && \hspace{1cm} = \frac{i^{n+1}\sqrt{2\pi}}{2\pi((s-r)^{2H}+\eps)^{(n/2)+1}}\frac{(n+1)!}{2^{(n+1)/2}((n+1)/2)!}
\\  && \hspace{1cm} = \frac{(-1)^{(n+1)/2}}{\sqrt{2\pi}((s-r)^{2H}+\eps)^{(n/2)+1}}\frac{n!}{2^{(n-1)/2}((n-1)/2)!}
.
\eea
if $n+1$ is even, and $0$ if $n+1$ is odd. Setting $n=2m-1$, it follows that the chaos expansion for $\aaa'_{t,\eps}(0)$ is
\bea \label{ni}
&&\nn \sum_{m=1}^\ff I_{2m-1}
\left( \frac{(-1)^m}{(m-1)!2^{m-1}\sqrt{2\pi}} \int_{0}^{t}\int_{0}^{s}\frac{(s-r)^{2H-1}}{(\eps + (s-r)^{2H})^{(2m+1)/2}} \prod_{j=1}^{2m-1}\rho^H_{r,s}(v_j)\,dr\,ds\right).
\\ && \hspace{1cm}
\eea

We now need to show that as $\eps \to 0$, the above converges in $L^2(\PP)$ to
\rrr{a1}. We will apply the following lemma adapted from \cite{nualart1992}, which is a consequence of the dominated convergence theorem.
\begin{lemma} \label{lem}
Let $F_\eps$ be a family of $L^2(\PP)$ random variables with chaos expansions
$F_\eps = \sum_{n=0}^{\ff}I_n(f_n^\eps)$. If for each $n$, $f_n^\eps$ converges in $ \H^{\otimes n}$ to $f_n$ as $\eps \lar 0$, and if

\begin{equation} \label{psb}
\sum_{n=0}^{\ff} \sup_\eps \E[I_n(f_n^\eps)^2] = \sum_{n=0}^{\ff} \sup_\eps\{n!||f_n^\eps||^2_{\H^{\otimes n}}\} < \ff,
\end{equation}
then $F_\eps$ converges in $L^2(\PP)$ to $F=\sum_{n=0}^{\ff}I_n(f_n)$ as $\eps \lar 0$.
\end{lemma}

We note that this argument has also been used in \cite{hu2005} and \cite{markowsky2011}.
To apply the lemma here, we calculate the $L^2(\PP)$-norms of
the chaos expansions and show they are bounded uniformly in
$\eps$. Recall that $\DD_t=\{0\leq
r\leq s \leq t\}$. If we let $g(2m-1,t,\eps)$ be the integrand of
$I_{2m-1}$ in \rrr{ni}, we have

\begin{equation} \label{opps}
\begin{split}
& \E [I_{2m-1}(g(2m-1,t,\eps))^2] =(2m-1)!||g(2m-1,t,\eps)||^2_{\H^{\otimes 2m-1}} \\
& = \frac{(2m-1)!}{2\pi[(m-1)!]^2 2^{2m-2}} \int_{\DD_t^2} \frac{(s-r)^{2H-1}(s'-r')^{2H-1}}{(\eps + (s-r)^{2H})^{(2m+1)/2}(\eps + (s'-r')^{2H})^{(2m+1)/2}} \\
& \qquad \qquad \qquad \times\left(\int_{\mathbb{R}^{2m-1}} \prod_{j=1}^{2m-1} \langle \rho^H_{r,s}(v_j),\rho^H{r',s'}(v_j) \rangle_{\H}\, dv_j\right) \,dr\,dr'\,ds\,ds'.
\end{split}
\end{equation}
Maximizing by setting $\eps=0$ and using (A.10) in \cite{elliot2003} and the notation in \rrr{tor}, this is equal to
\be\label{toys}
\frac{m(2m)!}{\pi (m!)^2 2^{2m}} \int_{\DD_t^2} \frac{(s-r)^{2H-1}(s'-r')^{2H-1}\mu^{2m-1}}{\la^{m+1/2}\rho^{m+1/2}} \,dr\,dr'\,ds\,ds'.
\ee

Let $\ga = \mu^2/(\la\rho)$. The $L^2(\PP)$-norm of \rrr{ni} is then

\be\label{cloys}
\frac{1}{\pi} \int_{\DD_t^2} \left(\sum_{m=1}^\ff \frac{m(2m)!\ga^m}{(m!)^22^{2m}}\right)\frac{(s-r)^{2H-1}(s'-r')^{2H-1}\,dr\,dr'\,ds\,ds'}{\mu \sqrt{\la \rho}}.
\ee
However, by the generalized binomial theorem,

\be \lll{trilo}
\frac{\ga}{2(1-\ga)^{3/2}}=\sum_{m=1}^\ff \frac{m(2m)!\ga^m}{(m!)^22^{2m}}.
\ee
Thus, the $L^2(\PP)$-norm of \rrr{ni} is

\be \lll{}
\nn\frac{1}{2\pi} \int_{\DD_t^2} \frac{\ga(s-r)^{2H-1}(s'-r')^{2H-1}}{(1-\ga)^{3/2}\mu \sqrt{\la \rho}} =
\frac{1}{2\pi} \int_{\DD_t^2} \frac{\mu(s-r)^{2H-1}(s'-r')^{2H-1}}{(\la \ppp - \mu^2)^{3/2}}.
\ee
By Lemma \ref{lab}, this is finite if $H<2/3$.
\end{proof}


{\bf Remark:}  One can also think of fBm as an isonormal Gaussian process $W:\H'\to L^2(\PP)$ on the space
$$\H':= L^2_H(\R):=\{f:M_Hf\in L^2(\R)\}$$
which may contain distributions. Then $\rho^H_{r,s}\equiv M_H1_{[r,s]}$ and
\bea
\langle 1_{[0,t]},1_{[0,s]}\rangle_{\H'}&:=&\lan \rho^H_{0,t},\rho^H_{0,t}\ran_{L^2(\R)}\\
&=&\frac{1}{2}(t^{2H} + s^{2H} - |t-s|^{2H}).
\nn\eea
Using this so called ``twisted'' inner product Hilbert space, the  isonormal Gaussian process gives $B^H_t=\lan \omega, 1_{[0,t]}\ran_{tw}$
 so that
$DB^H_t=1_{[0,t]}$ (In \cite[Sec. 3]{hu2005}, the twisted inner product space was utilized without reference to $M_H$ or $\rho^H_{r,s}$, and the reader can check that our argument above carries over easily into that setting).  Comparing this with the above, we see that the twisted inner product incorporates the operation of $M_H$ in $\lan\omega,M_H f\ran$ into
$\lan \omega, f\ran_{tw}$.
When $f$ is a step function, essentially nothing but notation has changed, which
can be verified by the use of the $D^M$ operator in Example 6.4 from \cite{elliot2003}.
One may then write

\begin{equation} \label{}
\int_{0}^{t}\int_{0}^{s}\frac{(s-r)^{2H-1}}{( (s-r)^{2H})^{(2m+1)/2}} \prod_{j=1}^{2m-1}1_{[r,s]}(v_j)\,dr\,ds
= \int_{\overline{v}}^{t}\int_{0}^{\underline{v}}\frac{(s-r)^{2H-1}}{( (s-r)^{2H})^{(2m+1)/2}} \,dr\,ds,
\end{equation}
where
\bea \label{tear}
&&\nn \overline{v}=v_1\vee \ldots \vee v_{2m-1},
\\ \nn && \underline{v}=v_1 \wedge \ldots \wedge v_{2m-1}.
\eea
It is then straightforward to verify that Theorem \ref{mmm} simplifies to the chaos expansion given in \cite{markowsky2011} in the case $H=1/2$.

\vski

We now use the methods in the above proof to show $L^2(\PP)$ convergence for $\aaa'_{t,\eps}(y)$ as $\eps\to 0$.

\begin{proposition} \label{mmm2}
For $H<2/3$ and any $y \in \reals$, $\aaa'_t(y)$ is in $L^2(\PP)$.
\end{proposition}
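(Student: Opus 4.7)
The plan is to mirror the chaos-expansion proof of Theorem \ref{mmm}: compute the Wiener chaos decomposition of $\aaa'_{t,\eps}(y)$, show that the sum of the $L^2$-norms of the chaos pieces is bounded uniformly in $\eps$, and then invoke Lemma \ref{lem}. The only change in the Stroock-formula calculation is in the expectation $\E[\frac{d^{n+1}}{dx^{n+1}}f_\eps(B^H_s-B^H_r-y)]$, which now carries an extra phase factor $e^{-ipy}$ in its Fourier representation. A standard Gaussian--polynomial identity then yields
\[
\E\!\left[\frac{d^{n+1}}{dx^{n+1}}f_\eps(B^H_s-B^H_r-y)\right] = \frac{H_{n+1}(y/\si)}{\sqrt{2\pi}\,\si^{n+2}}e^{-y^2/(2\si^2)},\qquad \si^2=(s-r)^{2H}+\eps,
\]
where $H_n$ denotes the probabilists' Hermite polynomial. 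Unlike the $y=0$ case, every chaos order now contributes.

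Computing $\E[I_n(g_n^\eps)^2] = n!\|g_n^\eps\|^2_{\H^{\otimes n}}$ exactly as in \eqref{opps}--\eqref{cloys} and summing over $n$ via Mehler's generating-function formula for Hermite polynomials (applicable because Cauchy--Schwarz gives $|\mu|/\sqrt{\la\ppp}\le 1$), one obtains, as $\eps\to 0$, the formal identity
\bas
\sum_{n\ge 0}\E[I_n(g_n)^2] &=& \frac{1}{2\pi}\int_{\DD_t^2}(s-r)^{2H-1}(s'-r')^{2H-1}\\
&&\times\left[\frac{\mu}{(\la\ppp-\mu^2)^{3/2}} + \frac{y^2(\la-\mu)(\ppp-\mu)}{(\la\ppp-\mu^2)^{5/2}}\right]e^{-\frac{y^2(\la+\ppp-2\mu)}{2(\la\ppp-\mu^2)}}\,dr\,dr'\,ds\,ds',
\eas
which at $y=0$ recovers the expression controlled by Lemma \ref{lab}. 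The first bracketed term, multiplied by a Gaussian factor at most $1$, is dominated in absolute value by Lemma \ref{lab}'s integrand and is therefore integrable for $H<2/3$.

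The main obstacle is the integrability of the $y^2$-piece. Naive uniform bounds such as $y^2e^{-cy^2/2}\le 2/(ec)$ are too lossy in the regions where $[r,s]$ and $[r',s']$ are nearly disjoint, since there $\mu\to 0$ and it is the \emph{individual} Gaussian decays $e^{-y^2/(2\la)}$ and $e^{-y^2/(2\ppp)}$, rather than the $\la\ppp-\mu^2$ denominator, that tame the $(\la\ppp)^{-5/2}$ singularity. My proposal is to split $\DD_t^2$ into sub-regions according to the ordering of $r,s,r',s'$ and, within each region, retain the dominant Gaussian factors and apply substitutions such as $w=y^2/(2\la)$, $w'=y^2/(2\ppp)$ (and analogous substitutions in overlapping regions) to reduce the problem to finite one-dimensional integrals of the form $\int w^{-1/2}e^{-w}\,dw$. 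Once a uniform-in-$\eps$ bound on $\sum_n \E[I_n(g_n^\eps)^2]$ is in hand, Lemma \ref{lem} (together with pointwise convergence of the chaos coefficients as $\eps\to 0$) yields $\aaa'_t(y)\in L^2(\PP)$.
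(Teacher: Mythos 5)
You have the right high-level strategy (chaos expansion of $\aaa'_{t,\eps}(y)$ plus Lemma \ref{lem}), but your execution diverges from the paper's and leaves the hardest step unproved. The paper does not compute $\E[(\frac{d^{n+1}}{dx^{n+1}}f_\eps)(B^H_s-B^H_r-y)]$ exactly; it simply bounds the Fourier representation by moving the absolute value inside, using $|e^{ipy}|=1$, to get
\[
\Big|\E[(\tfrac{d^{n+1}}{dx^{n+1}} f_\eps)(B^H_s-B^H_r-y)]\Big| \leq \frac{1}{2\pi((s-r)^{2H}+\eps)^{(n/2)+1}}\int_{\R}|p|^{n+1}e^{-p^2/2}\,dp,
\]
which is \emph{independent of $y$}. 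The only price is that the even-order chaos terms no longer vanish (since $\int_\R |p|^{n+1}e^{-p^2/2}\,dp\neq 0$ for $n$ even); the odd terms are controlled exactly as in Theorem \ref{mmm}, and the even terms are resummed via $\sum_{m\ge1}\frac{(m!)^2 2^{2m}\ga^m}{(2m)!}\le K\sqrt{\ga}/(1-\ga)^{3/2}$, so that both sums reduce to the very same integral already handled by Lemma \ref{lab}. No new integrability estimate is needed.

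Your route, by contrast, keeps the exact Hermite-polynomial expression (which is correct as a formula) and attempts a Mehler resummation, and you then candidly identify ``the integrability of the $y^2$-piece'' as the main obstacle and only \emph{propose} a region-splitting scheme to handle it. That is a genuine gap: the decisive estimate is not carried out, and it is precisely the part that cannot be waved through, since near-disjoint intervals make the $(\la\ppp-\mu^2)^{-5/2}$ singularity delicate. There is also a subtler problem with your plan: Lemma \ref{lem} requires $\sum_n\sup_\eps \E[I_n(f_n^\eps)^2]<\ff$, and for $y\neq0$ the pointwise integrand $H_{n+1}(y/\si)H_{n+1}(y/\si')\mu^n$ is no longer of one sign, so setting $\eps=0$ does not obviously dominate each term, and the signed Mehler identity for the limit does not by itself furnish the required uniform-in-$\eps$ majorant. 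Both difficulties evaporate if you take absolute values at the level of the Fourier integral, as the paper does.
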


\begin{proof}
We may follow the proof of Theorem \ref{mmm}, except that in place of \rrr{heyoh} we have

\bea \label{heyoh2}\nn
&& \Big|\E[(\frac{d^{n+1}}{dx^{n+1}} f_\eps)(B^H_s-B^H_r-y)]\Big| = \Big|\frac{i^{n+1}}{2\pi}\int_{\mathbb{R}}
\E[e^{ip(B^H_s-B^H_r)}] e^{ipy} p^{n+1}e^{-\eps p^2/2}\,dp\Big|
\\ && \hspace{2cm} \leq \frac{1}{2\pi((s-r)^{2H}+\eps)^{(n/2)+1}}\int_{\mathbb{R}}|p|^{n+1}e^{- p^2/2}\,dp.
\eea
We aim to apply Lemma \ref{lem} again. The arguments from  Theorem \ref{mmm} show that the sum of the odd terms in \rrr{psb} converges.
However, we can no longer argue that the even terms are 0, as we did before.
Instead, we must use the identity $\int_{\mathbb{R}}|p|^{n+1}e^{- p^2/2}\,dp = 2^{n/2}(n/2)!$, valid for even $n$.

Replacing \rrr{opps}, we then have

\begin{equation} \label{opps2}
\begin{split}
& \E [I_{2m}(g(2m,t,\eps))^2] = (2m)!||g(2m,t,\eps)||^2_2 \\
& = \frac{(2m)!(m!)^2 2^{2m}}{(2m!)^2} \int_{\DD_t^2} \frac{(s-r)^{2H-1}(s'-r')^{2H-1}}{(\eps + (s-r)^{2H})^{m+1}(\eps + (s'-r')^{2H})^{m+1}} \\
& \qquad \qquad \qquad \times\left(\int_{\mathbb{R}^{2m}} \prod_{j=1}^{2m} \langle \rho^H_{r,s}(v_j),\rho^H_{r',s'}(v_j) \rangle_\H dv_j\right) \,dr\,dr'\,ds\,ds'.
\end{split}
\end{equation}
We proceed through steps \rrr{toys} and \rrr{cloys}, setting $\eps = 0$ and $\ga = \mu^2/(\la\rho)$, in order to reach a bound on the even terms of

\be \lll{cloys2}
\frac{1}{\pi} \int_{\DD_t^2} \left(\sum_{m=1}^\ff \frac{(m!)^2 2^{2m}\ga^m}{(2m)!}\right)\frac{(s-r)^{2H-1}(s'-r')^{2H-1}\,dr\,dr'\,ds\,ds'}{\la \rho}.
\ee
We recall \rrr{trilo}, and note that it is easy to verify using Stirling's approximation that

\begin{equation} \label{}
\sup_m \Big(\frac{(m!)^2 2^{2m}}{(2m)!}\Big)\Big(\frac{m(2m)!}{(m!)^22^{2m}}\Big)^{-1} = K < \ff.
\end{equation}
In reference to \rrr{cloys2} we therefore have for $0\leq \ga < 1$

\begin{equation} \label{}
\sum_{m=1}^\ff \frac{(m!)^2 2^{2m}\ga^m}{(2m)!} \leq K \sum_{m=1}^\ff \frac{m(2m)!\ga^m}{(m!)^22^{2m}} = \frac{K \ga}{2(1-\ga)^{3/2}} \leq \frac{K \sqrt{\ga}}{2(1-\ga)^{3/2}}.
\end{equation}
%
%
Inserting this bound and the expression for $\ga$ into \rrr{cloys2} gives a bound of

\begin{equation} \label{}
\begin{split}
\frac{K}{2\pi} &\int_{\DD_t^2} \frac{\mu (s-r)^{2H-1}(s'-r')^{2H-1}\,dr\,dr'\,ds\,ds'}{(1-\frac{\mu^2}{\la \rho})^{3/2}\la^{3/2} \rho^{3/2}}\\
& = \frac{K}{2\pi} \int_{\DD_t^2}\frac{\mu (s-r)^{2H-1}(s'-r')^{2H-1}}{(\la \ppp - \mu^2)^{3/2}}  \,dr\,dr'\,ds\,ds'<\ff.
\end{split}
\end{equation}
By Lemma \ref{lab}, this is finite for $H<2/3$.
\end{proof}

\section{The Tanaka formula}\label{sec:tanaka}

%

\begin{proof}[Proof of Theorem \ref{tanakaThm}]
Let $f_\eps$ be defined as in \eqref{bsl23} and let
\be
F_\eps(x) = \int_0^x f_\eps(u) \,du = \int_0^{x/\eps} f(u) \,du.
\ee
By Theorem \ref{lemma:Ito} (fractional It\^o formula), we have
\bea
&&F_\eps(B^H_t - B^H_r - y) - F_\eps(-y) \\
&=& \nn\int_r^t f_\eps(B^H_s - B^H_r -y)\, dB^H_s + H \int_r^t f'_\eps(B^H_s - B^H_r - y)(s-r)^{2H-1}\, ds
\eea
Integrating with respect to $r$ from $0$ to $t$ gives

\bea\label{eq:Ito application2b}
&&\int_0^t F_\eps(B^H_t - B^H_r - y)\, dr -tF_\eps(-y) =\\
&& \nn\int_0^t\int_r^t  f_\eps(B^H_s - B^H_r -y)\, dB^H_s\,dr + H \int_0^t\int_r^t f'_\eps(B^H_s - B^H_r - y)(s-r)^{2H-1}\, ds \,dr.
\eea
Note that $F_\eps(-y) \to -\frac{1}{2} \sgn(y)$ as $\eps\to 0$. Also, $|F_\eps(\cdot)|\le 1/2$ for all $\eps>0$,
so by dominated convergence the integral term on the left side approaches $$\frac{1}{2}\int_0^t \sgn(B^H_t - B^H_r - y)\,dr$$ as $\eps\to 0$.
We now want to apply Fubini's theorem for fractional Brownian integration (Theorem \ref{lemma:Fubini}) to the first term on the left side to get
\be\label{afterfubini}
\int_0^t\int_0^s f_\eps(B^H_s - B^H_r - y) \,dr \,dB^H_s.
\ee

To justify Fubini and get \eqref{afterfubini}, we use the following Wiener-I\^o chaos expansion obtained from Stroock's formula:
\be
f_\eps(B^H_s - B^H_r - y)=\sum_{n\ge 0}I_n\(\frac{1}{n!}\E[(\frac{d^{n}}{dx^{n}} f_\eps)(B^H_s-B^H_r-y)] (\rho^H_{r,s})^{\otimes n}\).
\ee
The Hermite chaos refines the $n$th Wiener-It\^o chaos in terms of the Hermite orthonormal basis of $\H^{\otimes n}$.
In particular, the coefficient $c_\bb(s,r)$, with $|\bb|=n$, in the Hermite chaos expansion
of $f_\eps(B^H_s - B^H_r - y)$ is given by
\be
c_\bb(s,r)=\frac{1}{n!}\E[(\frac{d^{n}}{dx^{n}} f_\eps)(B^H_s-B^H_r-y)] \lan\xi^{\odot\bb}, (\rho^H_{r,s})^{\otimes n} \ran_{\H^{\otimes n}}.
\ee
Using \eqref{heyoh2} and the fact that $M_H\xi_k$ is bounded (see Lemma 4.1 of \cite{elliot2003}), one can now easily verify the conditions of Theorem \ref{lemma:Fubini} for $f_\eps(B^H_s - B^H_r - y)$ for $\eps>0$.

For fixed $s$, as $\eps \lar 0$ the inner integral in \eqref{afterfubini} converges to
$L_s^{B^H_s - y}$  in $(\SS)^*$ by  Proposition 10.1.13 in
\cite{biagini2008}. Now, if $L_s^{B^H_s - y}\diamond W^H(s)$  is integrable in $(\SS)^*$, then by the arguments of Proposition 8.1 in \cite{hida1993white},
\eqref{afterfubini} converges in $(\SS)^*$ to $ \int_0^t L_s^{B^H_s - y}\diamond W^H(s)\, ds$.
In other words, the equality in \eqref{mar} is valid as long as one side or the other is in $(\SS)^*$.  However, by Proposition \ref{mmm2}, $\aa_t'(y)\in L^2(\PP)$ , for $H<2/3$.  Thus
for such $H$, \eqref{mar}
holds in $L^2(\PP)$. This completes the proof of Theorem \ref{tanakaThm}.
\end{proof}

{\bf Remark:} One-dimensional fBm has an $L^2(\PP)$ local time for any $0<H<1$ (see \cite{biagini2008}),
but it is not clear whether or not  $ \int_0^t L_s^{B^H_s-y}\, dB^H_s$ is in $L^2(\PP)$ for $H>2/3$. As stated in the conjecture of Section \ref{sec:existence2}, we suspect that it is not. Of course,
a positive answer to the conjecture does not rule out the possibility that
\be \label{openprob}
 \int_0^t L_s^{B^H_s-y}\, dB^H_s\in (\SS)^* \quad\text{ for } H>2/3.
 \ee
 If \eqref{openprob} were indeed true, then the DSLT of fBm would also be well-defined in $(\SS)^*$ for all $H\in(0,1)$.
For $H<2/3$, another open problem is to prove joint continuity, in $y$ and $t$, of
$\a + \sgn(y)t$. One approach to proving this is to use the explicit chaos expansion for this integral (see Theorem \ref{mmm}) combined with Definition \ref{def:WIS integral}.

\appendix

\section{A Fubini theorem for the WIS integral}\label{sec:fbm integration}

In this appendix we prove a Fubini theorem for the WIS integral.  We note that there are several different ways one can integrate with respect to
fBm as can be seen in \cite{biagini2008} or \cite{mishura2008}.  The WIS integral is based on the fractional white noise theory introduced in \cite{elliot2003} (see also \cite{biagini2004}, \cite[Ch.
4]{biagini2008}).


In an effort to be somewhat self-contained,  we first
summarize some results concerning white noise and the WIS
integral. For more details we refer the reader to \cite{hida1993white, holden1996,elliot2003,biagini2004}. The only new result in this appendix is Theorem
\ref{lemma:Fubini} which is
standard for other constructions of stochastic integrals with respect to fBm (cf. \cite[Theorem
3.7]{cheridito2005}, \cite[Theorem 1.13.1]{mishura2008}); however,
we have found no reference for such results with respect to Hida distributions and the WIS
integral. Theorem \ref{lemma:Fubini}  follows easily once one has the right definitions.  Its formulation
may be of interest in its own right, but the main
reason for its presentation here is due to its role in proving the Tanaka formula.

\subsection{Classical white noise theory}

Let $\Lambda=\N^{\N}_0$ be the set of multi-indices with finite support, $\SS(\R)$ be the Schwartz
space, and $\SS'(\R)$ be the space of tempered distributions. By the
Bochner-Minlos theorem (see for example, Appendix A of \cite{biagini2008}) there is a probability measure $\PP$ on the
Borel $\sigma$-field of $\SS'$ satisfying
 \be\label{eqn:Bochner}
\int_{\SS'(\R)}\exp(i\lan \omega,f\ran)\, d\PP(\omega) =
\exp\(-\frac{1}{2}\|f\|^2_{L^2(\R)}\), \quad f\in\SS(\R).
 \ee
This measure on $\omega\in \SS'(\R)$ satisfies, for all $f\in\SS(\R)$,
\be \label{isometry}
\E\lan \omega,f\ran =0 \ \text{ and }\ \E \lan \omega,f\ran^2 = \|f\|^2_{L^2(\R)}.
\ee
If $g_n\in\SS(\R)$  converges to $1_{[0,t]}$ in $L^2(\R)$, we define

\be\label{S_to_L2}
\lan\omega,1_{[0,t]}\ran:=\lim_{n\to\ff}\lan \omega, g_n\rangle
\ee
as a limit in $L^2(\PP)$.
The family $\(1_{[0,t]}\)_{t\ge 0}$ are functionals mapping $\SS'(\R)$  to $\R$,
and can be identified with random variables having characteristic functions $\phi(\nu)=\exp\(-\frac{1}{2}(t\nu)^2\)$.  Choosing a continuous version of this family gives us Brownian motion.

 Recall
that the {\it Hermite polynomials} given by
 \be h_n(x) :=
(-1)^ne^{x^2/2}\frac{d^n}{dx^n}(e^{-x^2/2}), \quad n=0,1,2,\ldots
 \ee
are orthogonal with respect to the standard Gaussian measure on $\R$.  Thus multiplying by the Gaussian density and choosing a convenient normalization gives us an orthonormal basis for $L^2(\R)$ which live in $\SS(\R)$,
 \be
\xi_n(x):=\pi^{-1/4}((n-1)!)^{-1/2}h_{n-1}(\sqrt{2}x)e^{-x^2/2},
\quad n=1,2,3,\ldots,
 \ee
 called  the {\it Hermite functions.}\footnotemark\footnotetext{One may substitute any orthonormal basis of $L^2(\R)$ whose elements  possess decay properties such that Lemma 4.1 of \cite{elliot2003} holds. See also Theorem 3.1 in \cite{ito1951multiple}.}

For $\bb=(\bb_1,\ldots,\bb_n)\in\Lambda$,
we define
 \be
\HH_{\bb}(\omega):=h_{\bb_1}(\lan\omega,\xi_1\ran)h_{\bb_1}(\lan\omega,\xi_2\ran)\cdots
h_{\bb_n}(\lan\omega,\xi_n\ran).
 \ee
Every $F\in L^2(\PP)$ has a representation in terms of the $\HH_{\bb}$:
\be\label{wcII}
F(\omega) = \sum_{\bb\in\Lambda}c_\bb\HH_\bb(\omega)
\ee
where the series converges in $L^2(\PP)$.  Moreover, one has a Parseval-type identity $$\E F^2=\sum_{\bb\in\Lambda} \bb!c_\bb^2.$$

Representation \eqref{wcII} for $F$ is called the Hermite chaos expansion, and it is related to the Wiener-It\^o  chaos expansion\footnotemark\footnotetext{For a full treatment
of multiple Wiener-It\^o integrals and their related chaos expansions, see \cite{nualart1995}.} via the following formula which follows from \cite{ito1951multiple}:
\be\label{itos_mwi}
\int_{\R^n}\xi^{\odot\bb}dB^{\odot n} = \HH_\bb(\omega).
\ee
Here $\odot$ denotes the symmetrized tensor product, and $\xi^{\odot\bb}:=\xi_1^{\odot \bb_1}\odot\cdots\odot\xi_n^{\odot \bb_n}$.
In particular, the Hermite chaos is a way of writing the $n$th Wiener-It\^o chaos in terms of $n$-fold products of Hermite functions, which form orthonormal bases of $L^2(\R^n)$ (see \cite[pg. 30]{holden1996}).

The main reason for using chaos expansions in terms of Hermite polynomials instead of multiple Wiener-It\^o integrals is that the Hermite expansions can be used to generalize
  $L^2(\PP)$ to the space of Hida distributions.
Let
 \be
 (2\N)^\gamma:=(2\cdot1)^{\gamma_1}(2\cdot2)^{\gamma_2}\cdots(2\cdot
 n)^{\gamma_n}\quad \text{for
 }\gamma=(\ga_1,\ldots,\ga_n)\in\Lambda.
 \ee
\begin{definition}[Hida test functions and distributions]\label{def:Hida space}
Given the probability measure $\PP$ on $\SS'$, the Hida test
function space $(\SS)$ is the set of all $\psi\in L^2(\PP)$ given by
$$\psi(\omega) = \sum_{\bb\in\Lambda}a_\bb\HH_\bb(\omega)$$
where $a_\bb$ satisfy
$$ \sum_{\beta\in\Lambda}a_\bb^2\bb!(2\N)^{k\bb}\quad\text{for all
}k=1,2,\ldots$$
The Hida distribution space $(\SS)^*$ is the set of all formal
expansions
$$F(\omega) = \sum_{\bb\in\Lambda}b_\bb\HH_\bb(\omega)$$
where $b_\bb$ satisfy
$$ \sum_{\beta\in\Lambda}b_\bb^2\bb!(2\N)^{-q\bb}\quad\text{for some
}q\in\R.$$

\end{definition}

It was shown in \cite{zhang1992characterizations} that $(\SS)^*$ is
the dual of $(\SS)$.
Moreover, by Corollary 2.3.8 of \cite{holden1996}, $$(\SS)\subset
L^2(\PP)\subset (\SS)^*.$$ This should be thought of as analogous to
the triplet $\SS\subset L^2(\R)\subset \SS^*$. Note that if $F\in
L^2(\PP)$, then $$\lan\lan F,\psi\ran\ran=\lan
F,\psi\ran_{L^2(\PP)}=\E(F\psi).$$
Thus,
for $\psi=\sum_{\beta\in\Lambda} a_\beta\HH_\bb \in(\SS)$ and
$F=\sum_{\beta\in\Lambda} b_\beta\HH_\bb\in(\SS)^*$, the duality inherited from $L^2(\PP)$ is given by
\be\label{hida_inner_product}
\lan\langle F, \psi \ran\rangle := \sum_{\beta\in\Lambda} \beta !
a_\beta b_\beta. \ee

\begin{lemma}\label{lem:hidaintegral}
Suppose $F$ is an $(\SS)^*$-valued function on a $\sigma$-finite
measure space $(X,\BB,\nu)$ and that
 \be\label{A9}
 \lan\lan F(x), \psi
\ran\ran \in L^1(X,\nu(dx)) \quad \forall\ \psi \in (\SS).
 \ee
 Then there
is a unique $G$ in $(\SS)^*$ such that \be\label{A10} \lan\lan G,
\psi \ran\ran = \int_X\lan\lan F(x), \psi \ran\ran \,\nu(dx) \quad
\forall\ \psi \in (\SS). \ee We write$\int_X F(x)\,\nu(dx):= G$.
\end{lemma}
\begin{proof}
See Theorem 3.7.1 in \cite{hille1957functional} or Proposition 8.1
in \cite{hida1993white}.
\end{proof}

Let $T\subset\R$ be a time interval. In light of the above result,
we say an $(\SS)^*$-valued process $(F(t))_{t\in T}$ is in $ L^1(T)$ if
 \be\label{def:integrability}
 \lan\lan F(t),\psi\ran\ran\in L^1(T)\quad \text{for all }\psi\in(\SS).
 \ee

The Hida distribution space $(\SS)^*$ is a convenient space on which
to define the Wick product $F\diamond G\in (\SS)^*$.

\begin{definition}[Wick product]\label{def:wick product}
If  $F=\sum_{\beta\in\Lambda} b_\beta\HH_\bb$ and
$G=\sum_{\gamma\in\Lambda} c_\gamma\HH_\bb$ are elements of
$(\SS)^*$, then their {\it Wick product} is defined as
 \bea\nn
F\diamond G(\omega) &:=& \sum_{\beta,\gamma} b_{\beta} c_{\gamma}
\HH_{\beta+\gamma}(\omega)\\
&=&\nn\sum_{\lambda\in\Lambda}\(\sum_{\beta+\gamma=\lambda}
b_{\beta} c_{\gamma}\) \HH_{\lambda}(\omega), \ \ \text{for all }\omega\in \SS'(\R) .
 \eea
By Lemma 2.4.4 in \cite{holden1996}, $(\SS)^*$ is closed under this product.
\end{definition}

Given the measure $\PP$ on $\SS'$ from \eqref{eqn:Bochner}, we may
write Brownian motion as
 \bea
B(t)&=&\lan\omega,1_{[0,t]}(\cdot)\ran=\left\lan\omega,\sum_{k=1}^\infty\lan
1_{[0,t]},\xi_k\ran_{L^2(\R)}\xi_k(\cdot)\right\ran\\
&=&\nn \sum_{k=1}^\infty\int_0^t\xi_k(s)\, ds\, \lan\omega,\xi_k\ran
= \sum_{k=1}^\infty\int_0^t\xi_k(s)\, ds\, \HH_{\eps^{(k)}}(\omega).
 \eea
where $\eps^{(k)}$ is the multi-index with a $1$ in the $k$th entry
and $0$'s elsewhere. This motivates the following definition:

\begin{definition}[White noise]\label{def:white noise}
The $(\SS)^*$-valued process
$$W(t):= \sum_{k\ge 1}
\xi_k(t)\HH_{\eps^{(k)}}(\omega)$$ is called {\it white noise}.
\end{definition}

Using the Wick product on $(\SS)^*$ and the definition of the
integral of a Hida distribution given in Lemma
\ref{lem:hidaintegral}, we can now integrate a Hida distribution
with respect to white noise:
 \be
\int_\R F(t)\, dB_t := \int_\R F(t)\diamond W(t) \,dt.
 \ee
The above is called a WIS integral with respect to white noise. The following theorem shows that the WIS integral is a
generalization of the Skorohod integral:
\begin{theorem}\label{thm:skorohod reln}
Suppose $F(t,\omega):\R\times\Omega\to\R$ is Skorohod integrable.
Then \mbox{$F(t,\cdot)\diamond W(t)$} is $dt$-integrable in $(\SS)^*$ and
\be \int_a^b F(t,\omega) \,\delta B(t) = \int_a^b
F(t,\cdot)\diamond W(t)\,dt. \ee
\end{theorem}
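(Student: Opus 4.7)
The plan is to verify the identity by matching chaos expansions on both sides. Suppose $F(t,\omega)$ is Skorohod integrable on $[a,b]$, and write its Hermite chaos expansion, for each fixed $t$, as $F(t,\omega)=\sum_{\bb\in\Lambda}c_\bb(t)\HH_\bb(\omega)$ with coefficients $c_\bb \in L^2(\R)$. From the standard theory (cf.\ \cite{nualart1995} or Chapter 2 of \cite{holden1996}), Skorohod integrability is equivalent to
$$\sum_{\bb,k}(\bb_k+1)\,\bb!\,\lan c_\bb 1_{[a,b]},\xi_k\ran_{L^2(\R)}^2 < \ff,$$
and the Skorohod integral admits the chaos expansion
$$\int_a^b F(t,\omega)\,\delta B(t) = \sum_{\bb\in\Lambda}\sum_{k\ge 1}\lan c_\bb 1_{[a,b]},\xi_k\ran_{L^2(\R)}\,\HH_{\bb+\eps^{(k)}}(\omega),$$
with convergence in $L^2(\PP)\subset(\SS)^*$.

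On the other hand, by bilinearity of the Wick product and the chaos expansion of white noise from Definition \ref{def:white noise}, a term-by-term computation gives
$$F(t,\omega)\diamond W(t) = \sum_{\bb\in\Lambda}\sum_{k\ge 1}c_\bb(t)\,\xi_k(t)\,\HH_{\bb+\eps^{(k)}}(\omega)\quad\text{in }(\SS)^*.$$
To invoke Lemma \ref{lem:hidaintegral}, I fix an arbitrary test function $\psi=\sum_\gamma a_\gamma\HH_\gamma\in(\SS)$ and apply the pairing \eqref{hida_inner_product} to obtain
$$\lan\lan F(t,\cdot)\diamond W(t),\psi\ran\ran = \sum_{\bb,k}(\bb+\eps^{(k)})!\,c_\bb(t)\,\xi_k(t)\,a_{\bb+\eps^{(k)}}.$$
Cauchy--Schwarz in the index $(\bb,k)$, combined with the Skorohod integrability of $F$ and the rapid $(2\N)^{k\bb}$-decay satisfied by the $a_\gamma$'s per Definition \ref{def:Hida space}, shows that this pairing lies in $L^1([a,b],dt)$. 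Thus $F(t,\cdot)\diamond W(t)$ belongs to $L^1([a,b])$ in the sense of \eqref{def:integrability}, and its Hida-distributional integral $\int_a^b F(t,\cdot)\diamond W(t)\,dt$ is well defined.

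The last step is to integrate the pairing identity in $t$ and interchange sum with integral, justified by the same absolute bound just used. This yields
$$\lan\lan \int_a^b F(t,\cdot)\diamond W(t)\,dt,\psi\ran\ran = \sum_{\bb,k}(\bb+\eps^{(k)})!\,\lan c_\bb 1_{[a,b]},\xi_k\ran\,a_{\bb+\eps^{(k)}},$$
which is exactly the pairing $\lan\lan \int_a^b F(t,\omega)\,\delta B(t),\psi\ran\ran$ read off from the Skorohod chaos expansion. Since $(\SS)$ separates points of $(\SS)^*$, the two distributions coincide, proving the theorem.

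The main obstacle is the simultaneous verification of the $L^1([a,b])$-integrability needed for Lemma \ref{lem:hidaintegral} and the Fubini interchange of $\sum$ and $\int dt$; both reduce to a single estimate of the type $\sum_{\bb,k}(\bb+\eps^{(k)})!\,|\lan c_\bb,\xi_k\ran|\,|a_{\bb+\eps^{(k)}}|$, where the Skorohod integrability bound controls the $c$-side and the $(2\N)^{-q\bb}$-decay built into $(\SS)$ absorbs the factorial weights coming from $\psi$. Once this bound is in hand, every other step is a bookkeeping identity between multi-index sums.
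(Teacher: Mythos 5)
Your argument is correct in outline and is essentially the proof of the result the paper itself does not prove but simply cites (Theorem 2.5.9 of \cite{holden1996}): expand $F(t,\cdot)$ in the Hermite chaos, observe that Wick multiplication by $W(t)$ shifts each $\HH_\bb$ to $\HH_{\bb+\eps^{(k)}}$ with coefficient $c_\bb(t)\xi_k(t)$, check the $L^1$-pairing condition of Lemma \ref{lem:hidaintegral}, and match the resulting expansion with the chaos expansion of the Skorohod integral. One caveat: your claimed \emph{equivalence} of Skorohod integrability with $\sum_{\bb,k}(\bb_k+1)\,\bb!\,\lan c_\bb 1_{[a,b]},\xi_k\ran^2<\ff$ is not right --- the correct criterion involves the symmetrized kernels, i.e.\ $\sum_\gamma \gamma!\bigl(\sum_{\bb+\eps^{(k)}=\gamma}\lan c_\bb 1_{[a,b]},\xi_k\ran\bigr)^2<\ff$, and your diagonal condition is only sufficient; fortunately this does not damage the proof, since the $L^1([a,b])$ bound you need follows already from $\sum_\bb\bb!\|c_\bb\|^2_{L^2[a,b]}<\ff$ (membership of $F$ in $L^2(dt\times d\PP)$) together with the $(2\N)^{q\gamma}$-decay of the test-function coefficients, with Skorohod integrability proper only needed to identify the left-hand side as an element of $L^2(\PP)$.
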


\begin{proof}
See Theorem 2.5.9 in \cite{holden1996}.
\end{proof}

\subsection{Fractional white noise theory and a Fubini theorem}\label{app:fwn}

Elliot and Van Der Hoek \cite{elliot2003} introduced the fractional
white noise as an element of the Hida distribution space, and thus
constructed the WIS integral\footnotemark \footnotetext{Compare this to the integral
of \cite{hu2003fractional} where a fractional Brownian measure is
defined directly on tempered distributions $\SS'$. Integrals
with respect to this measure are defined for $H>1/2$.} which is valid for any
$H\in(0,1)$. The main tool
used to define the fractional white noise is the following operator\footnote{Our $M_H$ operator follows \cite{biagini2004} and differs from that of \cite{elliot2003} by a constant factor.} for which we set:

$$c_H:=\sqrt{\sin(\pi H) \Gamma(2H+1)}$$

\begin{definition}[$M_H$ operator]
The $M_H$ operator on $f\in\SS(\R)$ is defined by
$$\widehat{M_Hf}(y)= c_H|y|^{\frac{1}{2}-H}\hat{f}(y), \quad\text{for all } y\in\R.$$
\end{definition}
This operator extends to the space
 \bea\label{l2h}
L^2_H(\R)&:=&\{f:M_Hf\in L^2(\R)\}\\
&=&\nn\{f:|y|^{\frac{1}{2}-H}\hat{f}(y)\in L^2(\R)\}
 \eea
which is equipped with the inner product
 \be
 \lan f,g\ran_{L^2_H(\R)}=\lan M_Hf,M_Hg\ran_{L^2(\R)}.
 \ee
We note that $L^2_H(\R)$ is not closed under this inner product, and that its closure  contains distributions (see \cite[pg. 280]{nualart1995}
or \cite[Ch. 2]{biagini2008}).

Since $M_Hf\in L^2(\R)$ we can moreover define
$M_H:\SS'(\R)\to\SS'(\R)$ by
 \be
\lan M_H\omega,f\ran:=\lan \omega,M_Hf\ran\quad\text{for
}f\in\SS(\R), \omega\in\SS'(\R)
 \ee
 where $\lan \omega,M_Hf\ran$ is defined as in \eqref{S_to_L2}.

We have defined $M_H$ for a given fixed $H\in(0,1)$, however, the
theory extends to $M$ operators which are linear combinations
$a_1M_{H_1}+\cdots+a_nM_{H_n}$; for more on the $M$ operator see
\cite{samko1987integrals, elliot2003, lebovits2011stochastic}.

Using the orthonormal basis $e_k:=M_H^{-1}\xi_k$ of $L^2_H(\R)$ and writing $\rho^H_{0,t}:=M_H1_{[0,t]}$, we
define (and choose a continuous version) of fBm as
 \bea\label{def:fbmdef}
B^H(t)&:=&\lan\omega,\rho^H_{0,t}(\cdot)\ran = \lan
M_H\omega,1_{[0,t]}(\cdot)\ran
\eea
which satisfies, by \eqref{isometry} and (A.10) in \cite{elliot2003},
\bea
\E[B^H(t)B^H(s)]
&=&\lan \rho^H_{0,t}, \rho^H_{0,s}\ran_{L^2(\R)}\\
&=&\nn\frac{1}{2}(t^{2H} + s^{2H} - |t-s|^{2H}).
 \eea
 Note that the action of $\SS'$ on $\SS$ given by $\lan\omega,\cdot\ran$ is still inherited from the inner product of $L^2(\R)$ and not from the inner product of $L^2_H(\R)$.

The definition in \eqref{def:fbmdef} can be further rewritten as
 \bea
B^H(t)&=&= \lan
M_H\omega,1_{[0,t]}(\cdot)\ran\\
&=&\nn\left\lan M_H\omega,\sum_{k=1}^\infty\lan
1_{[0,t]},e_k\ran_{L^2_H(\R)}e_k(\cdot)\right\ran\\
&=&\nn\left\lan M_H\omega,\sum_{k=1}^\infty\lan
\rho^H_{0,t},\xi_k\ran_{L^2(\R)}e_k(\cdot)\right\ran\\
&=&\nn \sum_{k=1}^\infty\int_0^tM_H\xi_k(s)\, ds\,
\lan M_H\omega,e_k\ran = \sum_{k=1}^\infty\int_0^tM_H\xi_k(s)\, ds\,
\HH_{\eps^{(k)}}(\omega)
 \eea
which motivates the following notion of {\it fractional white
noise}:
 \be
W^H(t):= \sum_{k\ge 1} M_H\xi_k(t)\HH_{\eps^{(k)}}(\omega).
 \ee
By Lemma 4.1 in \cite{elliot2003}, $(W^H(t))_{t\in\R}$ is an $(\SS)^*$-valued process.
We note here that its underlying probability measure $\PP$ on $\SS'$ is the same as for
$(W(t))_{t\in\R}$.

\begin{definition}[WIS integral]\label{def:WIS integral}
Let $(F(t))_{t\in\R}$ be an $(\SS)^*$-valued process such that \mbox{$F(t)\diamond
W^H(t)$} is in $L^1(\R)$ (as in \eqref{def:integrability}). We define
 $$\int_\R F(t) \, dB^H_t :=\int_\R F(t) \diamond dB^H_t  := \int_\R F(t)\diamond W^H(t) \,dt.$$
\end{definition}

\begin{theorem}[fractional It\^o formula]\label{lemma:Ito}
Let $f:\R\times\R\to \R$, with $(s,x)\mapsto f(s,x)$, be in $C^{1,2}(\R\times\R)$. If the
random variables $$f(t,B^H_t),\ \ \int_r^t f_s(s,B^H_s) \,ds,\
\text{ and }\ \int_r^t f_{xx}(s,B^H_s) s^{2H-1}\, ds$$ are in
$L^2(\PP)$ for all $t>r$, then
\bea &&f(t,B^H_t) - f(r,B^H_r) \\&=& \int_r^t f_s(s,B^H_s)  \, ds +
\int_r^t f_x(s,B^H_s)  \, dB^H_s + H\int_r^t f_{xx}(s,B^H_s)  \, (s-r)^{2H-1}\,ds.\nn
\eea
\end{theorem}

\begin{proof}
See Theorem 3.8 of \cite{biagini2004}.
\end{proof}

We now prove a result which one can compare to Fubini-type theorems in
 \cite[Thm 3.7]{cheridito2005} and \cite[Thm 1.13.1]{mishura2008}.
Our result extends these Fubini theorems to integrals
of Hida distributions.

\begin{theorem}[Fubini-Tonelli theorem]\label{lemma:Fubini}
Let $$F_{s,r}=\sum_{\beta\in\Lambda} c_{\beta} (s,r) \HH_{\beta}$$ be an $(\SS)^*$-valued process indexed by
$(s,r)\in\R\times[0,t]$. If, for each $(\beta,k)$ pair,
$c_{\beta} (s,r)M_H\xi_k(s)$ is bounded above or below by an $L^1([r,t]\times[0,t])$ function, then
 \be\label{fubeqn}\int_0^t \int_r^t F_{s,r}(\omega)\,dB^{H}_s\,dr=
\int_0^t \(\int_0^s F_{s,r}(\omega)\,dr\) \,dB^{H}_s.
 \ee
 The equality in \eqref{fubeqn} is in the sense that if one side is in $(\SS)^*$, then the other is as well, and they are equal.  If in addition,
\be\label{eqn:intcondition}
 \(F_{s,r}1_{[r,t]}(s)\ddd W^{H}(s)\)(s,r)\in L^1(\R\times[0,t]),
\ee
 then both sides are in $(\SS)^*$ and  $c_\beta (s,\cdot)\in L^1[0,s]$ for a.e.
$s\in[0,t]$.
\end{theorem}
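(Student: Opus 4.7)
The plan is to reduce the identity to a scalar Fubini-Tonelli exchange by pairing both sides of \eqref{fubeqn} against an arbitrary Hida test function $\psi = \sum_\lambda a_\lambda \HH_\lambda \in (\SS)$, using the chaos expansion together with the definitions of the Wick product, white noise, the WIS integral (Definition \ref{def:WIS integral}), and the Hida-valued Lebesgue integral (Lemma \ref{lem:hidaintegral}). First, unfold
$$F_{s,r} \diamond W^H(s) = \sum_{\beta,k} c_\beta(s,r)\, M_H\xi_k(s)\, \HH_{\beta+\eps^{(k)}}(\omega),$$
so by the duality \eqref{hida_inner_product},
$$\langle\langle F_{s,r}\diamond W^H(s),\psi\rangle\rangle = \sum_{\beta,k} (\beta+\eps^{(k)})!\, a_{\beta+\eps^{(k)}}\, c_\beta(s,r)\, M_H\xi_k(s).$$
Applying Lemma \ref{lem:hidaintegral} to each side of \eqref{fubeqn}, the claim reduces to the scalar identity
$$\int_0^t\!\!\int_r^t c_\beta(s,r)\, M_H\xi_k(s)\, ds\, dr = \int_0^t\!\!\int_0^s c_\beta(s,r)\, M_H\xi_k(s)\, dr\, ds,$$
term by term in $(\beta,k)$, plus an absolute-summability check that permits interchanging the sum over $(\beta,k)$ with the double integral.

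Second, the inner identity is a standard Fubini-Tonelli exchange on the triangle $\{(s,r): 0\le r\le s\le t\}$. The hypothesis that, for each $(\beta,k)$, $c_\beta(s,r)\,M_H\xi_k(s)$ is bounded above or below by an $L^1$ function of $(s,r)$ on the triangle lets us split the integrand into a constant-sign part (for which Tonelli applies) plus an $L^1$ part (for which classical Fubini applies), yielding the exchange. The summability in $(\beta,k)$ of the remaining series is handled by combining the decay of $\{a_\lambda\}$ built into the definition of $(\SS)$ with the hypothesized bounds; this is the Cauchy-Schwarz argument against the $(2\N)^{k\beta}$ weights in Definition \ref{def:Hida space}, analogous to the convergence argument in Lemma 4.1 of \cite{elliot2003}.

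Third, to upgrade the scalar equality into the ``if one side is in $(\SS)^*$, then so is the other'' statement, we observe that the pairing of either side of \eqref{fubeqn} with every $\psi\in(\SS)$ produces the same scalar value by the argument above; hence if one side defines a continuous linear functional on $(\SS)$, so does the other, and they coincide in $(\SS)^*$. Finally, the supplementary hypothesis \eqref{eqn:intcondition} is exactly what Lemma \ref{lem:hidaintegral} requires to define the left-hand side as an element of $(\SS)^*$, and hence (by the equality already established) the right-hand side as well; the $(\SS)^*$-membership of $\int_0^s F_{s,r}\,dr$ for a.e.~$s$, and thus $c_\beta(s,\cdot)\in L^1[0,s]$, then follows by another application of Lemma \ref{lem:hidaintegral} in the inner variable.

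The main obstacle I expect is the bookkeeping in step two: writing the $L^1$-domination hypothesis in a form that simultaneously justifies Tonelli for each $(\beta,k)$ and the swap of $\sum_{\beta,k}$ with $\int_{\Delta_t}\!ds\,dr$. Once one decides to carry the test function $\psi$ through the calculation from the outset and view the identity at the level of its Hermite-chaos coefficients, the remaining work is genuinely routine, which is consistent with the authors' remark that the theorem ``follows easily once one has the right definitions.''
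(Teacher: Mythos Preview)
Your plan matches the paper's proof: unfold the WIS integral and Wick product into the double series in $(\beta,k)$, pair against an arbitrary $\psi\in(\SS)$ via Lemma~\ref{lem:hidaintegral}, apply scalar Tonelli term by term on the triangle, and repackage. The treatment of the supplementary hypothesis \eqref{eqn:intcondition} and of the ``if one side is in $(\SS)^*$ then so is the other'' clause is also as in the paper.

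The one place where you diverge is the justification for interchanging $\sum_{\beta,k}$ with the $ds\,dr$ integral. You propose a Cauchy--Schwarz estimate against the $(2\N)^{k\beta}$ weights, combining the decay of the $a_\lambda$ with the hypothesized $L^1$ bounds. But the hypothesis only bounds $c_\beta(s,r)M_H\xi_k(s)$ for each \emph{fixed} $(\beta,k)$, with no stated uniformity across pairs, so that estimate does not obviously close. The paper sidesteps this entirely with the observation that for any $\lambda\in\Lambda$ there are only \emph{finitely many} pairs $(\beta,k)$ with $\beta+\eps^{(k)}=\lambda$ (one for each nonzero entry of $\lambda$); together with the orthogonality of the $\HH_\lambda$, this means the paired series is really a single sum over $\lambda$, each of whose terms is a finite combination of the $c_\beta M_H\xi_k$. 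That is precisely the bookkeeping simplification you were anticipating in your final paragraph---with it, the per-$(\beta,k)$ Tonelli hypothesis already suffices and no separate absolute-summability argument is needed.
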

\begin{proof}
Unraveling the above definitions we have
\bea\label{eqn:fubini_calculations}
&&\int_0^t \int_r^t F_{s,r}(\omega)\,dB^{H}_s\,dr \\
\nn &:=& \int_0^t\int_\R F_{s,r}1_{[r,t]}(s) \diamond W^{H}(s)\,ds\,dr\\
 &:=& \int_0^t\int_\R
\nn\left(\sum_{\beta\in\Lambda} c_{\beta} (s,r) 1_{[r,t]}(s) \HH_{\beta}(\omega)\right)\diamond\left(\sum_{k\ge 0} M_H\xi_k(s)\HH_{\eps^{(k)}}(\omega)\right) \,ds\,dr\\
 &:=& \nn\int_0^t\int_\R \left(\sum_{\beta\in\Lambda, k\in\N} c_{\beta}(s,r)1_{[r,t]}(s) M_H\xi_k(s) \HH_{\beta+\eps^{(k)}}(\omega)\right) \,ds\,dr.
\eea
Denote the right-hand side above as $G$, an $(\SS)^*$-valued integral. By Lemma \ref{lem:hidaintegral},
such integrals are characterized by their action on $(\SS)$:
\bea\label{fubcalc}
&&\nn \lan\lan G,\psi\ran\ran \\&=&\nn \int_0^t \int_{\R} \sum_{\beta\in\Lambda, k\in\N}\lan\lan c_{\beta}(s,r)1_{[r,t]}(s)
M_H\xi_k(s)\HH_{\beta+\eps^{(k)}}, \psi \ran\ran \,ds\,dr\\
&=&\nn \sum_{\beta\in\Lambda, k\in\N}\int_0^t \int_{\R} \lan\lan c_{\beta}(s,r)1_{[r,t]}(s)
M_H\xi_k(s)\HH_{\beta+\eps^{(k)}}, \psi \ran\ran \,ds\,dr\\
 &=&\sum_{\beta\in\Lambda, k\in\N} \nn\int_\R\int_0^s  \lan\lan c_{\beta}(s,r)1_{[0,t]}(s) M_H\xi_k(s)\HH_{\beta+\eps^{(k)}}, \psi
 \ran\ran \,dr\,ds\\
  &=& \int_\R \sum_{\beta\in\Lambda, k\in\N} \lan\lan \lk\int_0^s c_{\beta}(s,r)1_{[0,t]}(s) M_H\xi_k(s)\HH_{\beta+\eps^{(k)}}  dr\rk, \psi
 \ran\ran\,ds
 \eea
where  equality in the second line follows since for any $\beta'\in\Lambda$ there are
finitely many pairs $(\beta,k)$ such that $\beta+\eps^{(k)}=\beta'$ (recall also that the $\HH_{\beta'}$ are orthogonal).
The third equality follows from Tonelli's Theorem for
real-valued functions which is possible due to our hypothesis. If in addition \eqref{eqn:intcondition} holds then both sides of this equality are in $(\SS)^*$ by \eqref{def:integrability}.
The final equality follows from Lemma
\ref{lem:hidaintegral}.

Teasing apart the right side of  \eqref{fubcalc} gives
 \bea
 \nn G&=& \int_\R\left(\sum_{\beta\in\Lambda} \lk\int_0^s c_{\beta} (s,r) dr\rk 1_{[0,t]}(s)\HH_{\beta}(\omega)\right)\diamond\left(\sum_{k\ge 0}
M_H\xi_k(s)\HH_{\eps^{(k)}}(\omega)\right)\,ds \\
 &=&\int_0^t \(\int_0^s F_{s,r}(\omega)\,dr\) \,dB^{H}_s
\eea
 as needed.
\end{proof}

\section{Second moment bounds for fBm}\label{appendix}

Our aim in this section is to prove Lemma \ref{lab}, but we must first take care of a technical detail. Recall
\bea \label{tor2}
\lambda &:=& \mbox{Var}(B^H_{s}-B^H_r) = |s-r|^{2H}
\\ \nn  \ppp &:=& \mbox{Var}(B^H_{s'}-B^H_{r'}) = |s'-r'|^{2H}
\\ \nn  \mu &:=& \mbox{Cov}(B^H_{s}-B^H_r,B^H_{s'}-B^H_{r'}) \\
&=&\nn \frac{1}{2}(|s'-r|^{2H} + |s-r'|^{2H} - |s'-s|^{2H} - |r'-r|^{2H}).
\eea

As before, we let $K$ denote a positive constant,
which may change from line to line. Lemma 3.1 of \cite{hu2001} asserts the following bounds on the quantity $\la \ppp - \mu^2$, which have been found useful by a number of authors in connection with intersection local times of fBm.

{\it
\begin{itemize} \label{}

\item[(i)] Suppose $r<r'<s<s'$. Let $a=r'-r$, $b=s-r'$, $c=s'-s$. Then

\be \lll{o1}
\la \ppp - \mu^2 \geq K\((a+b)^{2H}c^{2H} + a^{2H}(b+c)^{2H}\).
\ee

\item[(ii)] Suppose $r<r'<s'<s$. Let $a=r'-r$, $b=s'-r'$, $c=s-s'$. Then

\be \lll{oldone}
\la \ppp - \mu^2 \geq Kb^{2H}(a+b+c)^{2H}.
\ee

\item[(iii)] Suppose $r<s<r'<s'$. Let $a=s-r$, $b=r'-s$, $c=s'-r'$. Then

\be \lll{o3}
\la \ppp - \mu^2 \geq K(a^{2H}c^{2H}).
\ee

\end{itemize}
}

\vski

Unfortunately, $(ii)$ is false, which can be seen by noting that if
$r' \searrow r, s' \nearrow s$ then the left side of \rrr{oldone}
approaches 0 while the right side does not. Fortunately, however, $(ii)$ can be
replaced by $(ii')$ in the following lemma, which suffices for our purposes as well as in every other instance in
which we have seen $(ii)$ applied.

\vski

\bl \label{lemma:appendix} (i) and (iii) hold, and (ii) may be replaced by the following:

\vski

(ii') Suppose $r<r'<s'<s$. Let $a=r'-r$, $b=s'-r'$, $c=s-s'$. Then

\be \lll{newone}
\la \ppp - \mu^2 \geq Kb^{2H}(a^{2H}+c^{2H}).
\ee

\el

\begin{proof}[Proof of (ii')]

We will follow the method of proof given in \cite{hu2001} for $(ii)$ to arrive at
$(ii')$. We use the local nondeterminism property of fBm \cite{xiao2006} which implies that the following is true
for $t_0< \ldots < t_j$:

\be \label{jc}
\mbox{Var}\(\sum_{i=1}^j u_i (B^H_{t_i}-B^H_{t_{i-1}}) \) \geq K
\sum_{i=1}^j |u_i|^2 (t_i-t_{i-1})^{2H}.
\ee
In our case, this implies

\bea \label{k0}
&& \mbox{Var}\( u (B^H_{s}-B^H_{r})+v(B^H_{s'}-B^H_{r'})\)
\\ \nn && \hspace{1cm} = \mbox{Var}\( u (B^H_{r'}-B^H_{r})+
(u+v)(B^H_{s'}-B^H_{r'}) + u (B^H_{s}-B^H_{s'})\)
\\ \nn && \hspace{1cm} \geq K\(u^2(a^{2H} +c^{2H}) + (u+v)^2(b^{2H})\).
\eea
However, we also have

\be \lll{k1}
\mbox{Var}\( u (B^H_{s}-B^H_{r})+v(B^H_{s'}-B^H_{r'})\) = u^2\la + 2uv\mu + v^2 \ppp.
\ee
Combining \rrr{k0} and \rrr{k1} yields

\be \lll{aic}
u^2(\la - K a^{2H} - Kb^{2H} - Kc^{2H}) + 2uv(\mu - Kb^{2H}) +v^2(\ppp - Kb^{2H}) \geq 0.
\ee
The discriminant of \rrr{aic} therefore satisfies

\be \lll{}\nn
4(\mu - Kb^{2H})^2 - 4(\la - K a^{2H} - Kb^{2H} - Kc^{2H})(\ppp - Kb^{2H}) \leq 0
\ee
which gives
\bea \lll{d9}
&& \la \ppp - \mu^2
\\ \nn \hspace{1cm} &\geq& K\((a^{2H} + b^{2H} + c^{2H})\ppp + b^{2H}\la - 2\mu b^{2H}\)
\\ \nn && \hspace{1.2cm} -K^2\(b^{2H}(a^{2H} + b^{2H} + c^{2H})-b^{4H}\).
\eea
By the Cauchy-Schwarz inequality,

\be \lll{}\nn
\mu \leq \sqrt{\la \ppp} \leq \frac{\la + \ppp}{2}.
\ee
Using this, together with $\la = (a+b+c)^{2H}$ and $\ppp = b^{2H}$ allows us to reduce \rrr{d9} to

\be\nn
\la \ppp - \mu^2 \geq (K-K^2)b^{2H}(a^{2H} + c^{2H}).
\ee
The result follows by replacing $(K-K^2)$ with $K$.
\end{proof}

{\it Proof of Lemma \ref{lab}:} Recall that we must show for $\DD_t=\{0\leq
r\leq s \leq t\}$ and $0<H<2/3$ we have

\be \lll{t43}
\int_{\DD_t^2}\frac{\mu (s-r)^{2H-1}(s'-r')^{2H-1}}{(\la \ppp - \mu^2)^{3/2}}  \,dr\,dr'\,ds\,ds'<\ff.
\ee
We will split the range of integration into the three regions described in in Lemma \ref{lemma:appendix}, with $a,b,c$ defined accordingly on each region.

\vski

{\bf Case 1:} $r<r'<s<s'$. Using $s-r=a+b, s'-r'=b+c$, and \rrr{o1} we see that the contribution of this region to \rrr{t43} can be bounded by

\begin{equation} \label{noga}
\begin{split}
K\int_{[0,t]^3} &\frac{\mu(a+b)^{2H-1}(b+c)^{2H-1}}{((a+b)^{2H}c^{2H} + a^{2H}(b+c)^{2H})^{3/2}}\,da\,db\,dc \\
& \leq K\int_{[0,t]^3} \frac{\mu(a+b)^{2H-1}(b+c)^{2H-1}}{(a+b)^{3H/2}c^{3H/2}a^{3H/2}(b+c)^{3H/2}}\,da\,db\,dc \\
& = K\int_{[0,t]^3} \frac{\mu \,da\,db\,dc}{(a+b)^{1-H/2}c^{3H/2}a^{3H/2}(b+c)^{1-H/2}}.
\end{split}
\end{equation}
Using an idea which appears in \cite{hu2001}, we write

\begin{equation} \label{oasis}
\begin{split}
2\mu & = (a+b+c)^{2H}+b^{2H} - a^{2H} - c^{2H} \\
& = 2H(b+c) \int_{0}^{1} (a+(b+c)u)^{2H-1}du + b^{2H} - c^{2H}.
\end{split}
\end{equation}
We can bound the integral in \rrr{oasis} by replacing the integrand by its maximal value.
This gives a bound of $|\mu| \leq K((b+c)a^{2H-1} +b^{2H} + c^{2H})$ for $H < 1/2$, and a bound of $|\mu| \leq K((b+c)+b^{2H} + c^{2H})$ when $H \geq 1/2$.

If $H<1/2$, we bound \rrr{noga} by

\begin{equation} \label{}
K\int_{[0,t]^3} \frac{((b+c)a^{2H-1} +b^{2H} + c^{2H})}{(a+b)^{1-H/2}c^{3H/2}a^{3H/2}(b+c)^{1-H/2}}\,da\,db\,dc.
\end{equation}
We need only show that each of these three terms is integrable, and we may obtain bounds by replacing nonnegative powers of
$(a+b)$ in the denominator by powers of either $a$ or $b$, and likewise for $(b+c)$.
When $H<1/2$ we will also use $(a+b)^{1-H/2} \geq a^{1-2H}b^{3H/2}$.
Integrability follows from:

\begin{eqnarray} \label{a15}\nn
\frac{(b+c)a^{2H-1}}{(a+b)^{1-H/2}c^{3H/2}a^{3H/2}(b+c)^{1-H/2}} &\leq& \frac{K}{(a+b)^{1-H/2}c^{3H/2}a^{1-H/2}}\leq \frac{K}{b^{1-H/2}c^{3H/2}a^{1-H/2}},\\ \nn
\frac{b^{2H}}{(a+b)^{1-H/2}c^{3H/2}a^{3H/2}(b+c)^{1-H/2}} &\leq& \frac{K b^{2H}}{b^{3H}c^{1-H/2}a^{1-H/2}}= \frac{K}{b^{H}c^{1-H/2}a^{1-H/2}},\\
\frac{c^{2H}}{(a+b)^{1-H/2}c^{3H/2}a^{3H/2}(b+c)^{1-H/2}} &\leq& \frac{K }{(a+b)^{1-H/2}c^{1-H}a^{3H/2}}\leq \frac{K }{b^{1-H/2}c^{1-H}a^{3H/2}}.\nn\\
\end{eqnarray}
In each case we obtain an integrable function of $a,b,c$, i.e. the exponent of each variable in the denominator is less than one.

For $1/2 \leq H < 2/3$, we have

\begin{equation*} \label{}
\frac{(b+c)}{(a+b)^{1-H/2}c^{3H/2}a^{3H/2}(b+c)^{1-H/2}} \leq \frac{K}{(a+b)^{1-H/2}c^{3H/2}a^{3H/2}}\leq \frac{K}{b^{1-H/2}c^{3H/2}a^{3H/2}},
\end{equation*}

\begin{equation*} \label{}
\frac{b^{2H}}{(a+b)^{1-H/2}c^{3H/2}a^{3H/2}(b+c)^{1-H/2}} \leq \frac{K }{b^{2-3H}c^{3H/2}a^{3H/2}},
\end{equation*}
while the third term follows exactly as in the case $H<1/2$ (note the second inequality in \eqref{a15}
used a bound which is not valid when $H>1/2$). This completes Case 1.
\vski

{\bf Case 2:} $r<r'<s'<s$. Following \cite{hu2001} we can write

\begin{equation} \label{}
\begin{split}
2\mu & = (a+b)^{2H}-a^{2H} + (b+c)^{2H} - c^{2H} \\
& = 2Hb \int_{0}^{1} ((a+bu)^{2H-1} + (c+bu)^{2H-1})\,du.
\end{split}
\end{equation}
Replacing the integrand with its maximum over the region gives us a bound of $\mu \leq K b$ for $H \geq 1/2$ and $\mu \leq K b (a^{2H-1}+c^{2H-1})$ for $H<1/2$.

First consider $H< 1/2$. In this case, $s-r=a+b+c$ and  $s'-r'=b$.  Also note that
$$(a+c)^{2H} \le (2\max(a,c))^{2H} =2^{2H} \max(a,c)^{2H} \le 2^{2H} (a^{2H} + c^{2H}),$$
thus  $(a+c)^{2H}$ and $(a^{2H} + c^{2H})$ are equivalent up to a constant.
We bound the contribution of this region to \rrr{t43}, using \rrr{newone}, by

\begin{equation} \label{}
\begin{split}
K\int_{[0,t]^3} & \frac{b (a^{2H-1}+c^{2H-1})(a+b+c)^{2H-1}b^{2H-1}}{b^{3H}(a+c)^{3H}}\,da\,db\,dc \\
& \leq K \int_{[0,t]^3} \frac{a^{2H-1}(a+b+c)^{2H-1}}{b^{H}(a+c)^{3H}}\,da\,db\,dc \\
& \leq K \int_{[0,t]^3} \frac{a^{2H-1}b^{2H-1}}{b^{H}(a+c)^{3H}}\,da\,db\,dc \\
& \leq K \int_{[0,t]^2} \frac{a^{2H-1}}{(a+c)^{3H}}\,da\,dc \\
& \leq K \int_{[0,t]} a^{2H-1}(1+\frac{1}{a^{3H-1}})\,da < \ff.
\end{split}
\end{equation}
For $1/2\leq H<2/3$, we have

\begin{equation} \label{}
\begin{split}
K\int_{[0,t]^3} & \frac{b(a+b+c)^{2H-1}b^{2H-1}}{b^{3H}(a+c)^{3H}}\,da\,db\,dc \\
& = K \int_{[0,t]^3} \frac{(a+b+c)^{2H-1}}{b^{H}(a+c)^{3H}}\,da\,db\,dc \\
& \leq K \int_{[0,t]^3} \frac{\,da\,db\,dc}{b^{H}(a+c)^{3H}} < \ff.
\end{split}
\end{equation}
This completes Case 2.
\vski

{\bf Case 3:} $r<s<r'<s'$. Following \cite{hu2001} we write

\begin{equation} \label{rem}
\begin{split}
2\mu & = (a+b+c)^{2H}+b^{2H} - (a+b)^{2H} - (b+c)^{2H} \\
& = 2H(2H-1)ac \int_{0}^{1}\int_{0}^{1} (b+vc+ua)^{2H-2}\,du\,dv.
\end{split}
\end{equation}
By Young's inequality, applied twice, for $\aaa,\bb>0$ with $\aaa + \bb = 1$ we have

\begin{equation} \label{}
\begin{split}
(b+vc+ua) \geq Kb^\aa(vc+ua)^\bb \geq K b^\aa(vc)^{\bb/2}(ua)^{\bb/2}.
\end{split}
\end{equation}
This combines with \rrr{rem} to give

\begin{equation} \label{live}
|\mu| \leq K (ac)^{\bb(H-1) + 1}b^{2\aa(H-1)}.
\end{equation}
Using $s-r=a, s'-r'=c$, \rrr{live}, and \rrr{o3} shows that the contribution of this region to \rrr{t43} is bounded by

\begin{equation} \label{davmat}
\begin{split}
K\int_{[0,t]^3} & \frac{(ac)^{\bb(H-1) + 1}b^{2\aa(H-1)} a^{2H-1}c^{2H-1}}{a^{3H}c^{3H}}\,da\,db\,dc \\
& = K\int_{[0,t]^3} \frac{\,da\,db\,dc}{b^{2\aa(1-H)}(ac)^{\bb + H(1-\bb)}}.
\end{split}
\end{equation}
Note that $1>2H(1-H)$ so that $\frac{1}{2(1-H)}>H$. We may therefore choose $\aa$ such that $H< \aa < \frac{1}{2(1-H)}$ implying $\bb < 1-H$.
The exponents in the final expression in \rrr{davmat} can therefore be bounded by $2\aa(1-H)< 2(\frac{1}{2(1-H)})(1-H)=1$
and $\bb + H(1-\bb) < (1-H) + H = 1$. We conclude that \rrr{davmat} is finite, which completes the final case for the proof of Lemma \ref{lab}. \qed
\noindent

\bibliographystyle{alpha}
\bibliography{biblioJan2013}
\end{document}